 \pgfplotsset{compat=newest}
\definecolor{P285U}{cmyk}{0.89,0.43,0.0,0.0}
\definecolor{P285U_font}{cmyk}{0.89,0.43,0.0,0.4}
\definecolor{lgray}{cmyk}{0,0,0,0.2}
\definecolor{myblue}{cmyk}{100,75,0,0}
\definecolor{jkuBlue}{RGB}{4,110,152}
\definecolor{jkuBlue}{RGB}{0,120,170}
\definecolor{jkuCyan}{RGB}{100,180,190}
\definecolor{jkuYellow}{RGB}{230,195,35}
\definecolor{jkuGrey}{RGB}{125,130,140}
\definecolor{jkuDarkGrey}{RGB}{51,51,51}
\definecolor{jkuLightGreen}{RGB}{195,215,75}
\definecolor{jkuGreen}{RGB}{115,180,85}
\definecolor{jkuPurple}{RGB}{145,75,130}
\definecolor{jkuRed}{RGB}{205,90,80}
\newenvironment{proof}{\begin{pf}}{\end{pf}}
\begin{document}
\begin{frontmatter}

\title{Energy-Based In-Domain Control and Observer Design for Infinite-Dimensional Port-Hamiltonian Systems\thanksref{footnoteinfo}} 
% Title, preferably not more than 10 words.

\thanks[footnoteinfo]{This work has been supported by the Austrian Science Fund (FWF) under
grant number P 29964-N32.}

\author[First]{Tobias Malzer}
\author[Second]{Jes\'{u}s Toledo}
\author[Second]{Yann Le Gorrec}
\author[First]{Markus Sch\"{o}berl}

\address[First]{Institute of Automatic
Control and Control Systems Technology, Johannes Kepler University Linz,
Altenbergerstrasse 66, 4040 Linz, Austria (e-mail: \{tobias.malzer\_1, markus.schoeberl\}@jku.at).}

\address[Second]{FEMTO-ST, Univ. Bourgogne Franche-Comt\'{e}, CNRS, Besan\c{c}on, France, 
24 rue Savary, F-25000 Besan\c{c}on, France (e-mail: \{legorrec, jesus.toledo\}@femto-st.fr).}

%\address[Second]{B\&R Industrial Automation GmbH, B\&R Stra\ss{}e 1, 5142 Eggelsberg, Austria (e-mail: hubert.rams@br-automation.com).}

\begin{abstract}                % Abstract of not more than 250 words.
In this paper, we consider infinite-dimensional port-Hamiltonian systems with in-domain actuation by means of an approach based on Stokes-Dirac structures as well as in a framework that exploits an underlying jet-bundle structure. In both frameworks, a dynamic controller based on the energy-Casimir method is derived in order to stabilise certain equilibrias. Moreover, we propose distributed-parameter observers deduced by exploiting damping injection for the observer error. Finally, we compare the approaches by means of an in-domain actuated vibrating string and show the equivalence of the control schemes derived in both frameworks.
%In both frameworks, a dynamic controller based on the well-known energy-Casimir method together with a distributed-parameter observer is derived in order to stabilise certain equilibrias.
\end{abstract}

\begin{keyword}
infinite-dimensional systems, partial differential equations, in-domain actuation, port-Hamiltonian systems, structural invariants
\end{keyword}

\end{frontmatter}
%===============================================================================

\section{Introduction}

Energy-based approaches are well established for the control design
of dynamical systems since they allow to exploit the physical properties
of the underlying system, see e.g. \cite{Schaft2000,Ortega2001}.
In this context, especially the port-Hamiltonian (pH) system representation,
which originally was designed for systems described by ordinary differential
equations (ODEs), has proven to be an appropriate tool. From a control-engineering
point of view, a major advantage of the pH-framework is the fact that
so-called power ports can be introduced, which can be used to incorporate
the externally supplied energy and therefore enable the interconnection
with other systems. Recently, a lot of research effort has been invested
in the adaptation of the pH-framework and energy-based control strategies
to systems governed by partial differential equations (PDEs).

However, it is worth stressing that the pH-system representation is
not unique. With respect to control-engineering purposes, in particular
two different frameworks have turned out to be especially suitable.
The main difference between the so-called Stokes-Dirac scenario, see
\cite{Schaft2002,Gorrec2005}, and the jet-bundle approach, see \cite{Ennsbrunner2005,Schoeberl2011,Schoeberl2014a},
is the choice of the variables (energy variables vs. derivative variables),
and consequently, the generation of the power ports strongly depends
on the chosen approach; see e.g. \cite{Schoeberl2013b} for a comparison
of these frameworks by means of the well-known Mindlin plate. In this
contribution, we intend to compare not only the system representations
based on these frameworks, but also the controller as well as the
observer design and draw some interesting conclusions.

Similar to \cite{Macchelli2004,Macchelli2017} and \cite{Schoeberl2011,Rams2017a},
where dynamic control schemes based on the well-known energy-Casimir
method for so-called boundary-control systems are addressed within
the Stokes-Dirac scenario and the jet-bundle framework, respectively,
in this paper the aim is to derive dynamic control laws, where we
focus on infinite-dimensional systems with in-domain actuation. While
the energy-Casimir method has been extended to this system class in
\cite{Malzer2019} for the jet-bundle approach, in this paper we discuss
the controller design for in-domain actuated systems within both frameworks
and aim at comparing the results.

The energy-Casimir method is working appropriately if the initial
conditions of the plant are known precisely; however it yields unsatisfactory
results for uncertain initial conditions, see e.g. \cite{Rams2017b},
where this problem is briefly discussed for a boundary-control system.
To overcome this obstacle, in this paper the energy-based control
law shall rely on a distributed-parameter observer. In the literature,
a lot of different observer-design schemes for the infinite-dimensional
scenario are available, see e.g. \cite{Smyshlyaev2005} for an approach
that relies on the so-called backstepping methodology or \cite{Schaum2016}
for a dissipativity-based observer design. Motivated by the fact that
we focus on pH-systems, in this contribution we intend to exploit
energy considerations with respect to the observer design.

Thus, the main contributions are as follows: i) within both considered
frameworks, we derive a Casimir-based control law for in-domain actuated
systems, see Section 4. ii) moreover, in Section 5, a distributed-parameter
observer is deduced within both considered frameworks by exploiting
energy considerations for the observer error. iii) we demonstrate
the capability of the proposed approach by means of an in-domain actuated
vibrating string, where the equivalence of both control laws deduced
in the different frameworks is shown, see Section 6.

\section{Notation and Preliminaries}

In this paper, we investigate distributed-parameter systems with a
$1$-dimensional spatial domain that shall be equipped with the coordinate
$z\in\left[0,L\right]$. The standard inner product on $L^{2}([0,L];\mathbb{R}^{n})$
is denoted by $\left\langle \cdot,\cdot\right\rangle _{L^{2}}$, while
a Sobolev space of order $p$ is indicated by $H^{p}([0,L];\mathbb{R}^{n})$.
In the following, we discuss some differential-geometric preliminaries.
Formulars are kept short and readable by applying tensor notation
and especially Einstein's convention on sums, where we will not indicate
the range of the indicess when they are clear from the context. Furthermore,
we use the standard symbols $\mathrm{d}$, $\wedge$ and $\rfloor$
for the exterior derivative, the exterior (wedge) product and the
natural contraction between tensor fields, respectively. Moreover,
we avoid the use of pull-back bundles. The set of all smooth functions
on a manifold $\mathcal{M}$ is denoted by $C^{\infty}(\mathcal{M})$.

The total manifold $\mathcal{E}$ of a bundle $\pi:\mathcal{E}\rightarrow\mathcal{B}$
is equipped with the coordinates $(z,x^{\alpha})$, where $x^{\alpha}$,
with $\alpha=1,\ldots,n$, denotes the dependent variables, while
the $1$-dimensional base manifold possesses the independent coordiante
$(z)$, and therefore, a bundle -- often denoted by $\pi:(z,x^{\alpha})\rightarrow(z)$
-- allows to easily distinguish between dependent and independent
variables. Note that a mathematical expression restricted to the boundary
of $\mathcal{B}$ is indicated by $(\cdot)|_{0}^{L}$. To be able
to introduce so-called derivative variables, we consider jet manifolds,
where for instance the $2$nd jet manifold $\mathcal{J}^{2}(\mathcal{E})$
possesses the coordinates $(z,x^{\alpha},x_{z}^{\alpha},x_{zz}^{\alpha})$,
with $x_{zz}^{\alpha}$ denoting the $2$nd-order derivative variable
or jet coordinate, i.e. the $2$nd derivative of $x^{\alpha}$ with
respect to $z$.

Next, we introduce the tangent bundle $\tau_{\mathcal{E}}=\mathcal{T}(\mathcal{E})\rightarrow\mathcal{E}$,
which is equipped with the coordinates $(z,x^{\alpha},\dot{z},\dot{x}^{\alpha})$
together with the fibre bases $\partial_{z}=\partial/\partial z$
and $\partial_{\alpha}=\partial/\partial x^{\alpha}$. Furthermore,
the vertical tangent bundle $\nu_{\mathcal{E}}:\mathcal{V}(\mathcal{E})\rightarrow\mathcal{E}$
possessing the coordinates $(z,x^{\alpha},\dot{x}^{\alpha})$ is a
subbundle of $\tau_{\mathcal{E}}$, and hence, a vertical vector field
$v=\mathcal{E}\rightarrow\mathcal{V}(\mathcal{E})$ is a section given
in local coordinates as $v=v^{\alpha}\partial_{\alpha}$ with $v^{\alpha}\in C^{\infty}(\mathcal{E})$.
Consequently, by means of the total derivative $d_{z}=\partial_{z}+x_{z}^{\alpha}\partial_{\alpha}+x_{zz}^{\alpha}\partial_{\alpha}^{z}+\ldots$,
with the abbreviation $\partial_{\alpha}^{z}=\partial/\partial x_{z}^{\alpha}$,
the $1$st prolongation of a vertical vector field is given as $j^{1}(v)=v^{\alpha}\partial_{\alpha}+d_{z}(v^{\alpha})\partial_{\alpha}^{z}$.

The so-called co-tangent bundle $\tau_{\mathcal{E}}^{*}:\mathcal{T}^{*}(\mathcal{E})\rightarrow\mathcal{E}$,
which possesses the coordinates $(z,x^{\alpha},\dot{z},\dot{x}_{\alpha})$
and the bases $\mathrm{d}z$ and $\mathrm{d}x^{\alpha}$, is a further
important differential geometric object and allows to define a one-form
$w:\mathcal{E}\rightarrow\mathcal{T}^{*}(\mathcal{E})$ as a section
given in local coordinates as $w=\breve{w}\mathrm{d}z+w_{\alpha}\mathrm{d}x^{\alpha}$
with $\breve{w},w_{\alpha}\in C^{\infty}(\mathcal{E})$. In this contribution,
we are interested in densities $\mathfrak{F}=\mathcal{F}\mathrm{d}z$
where the coefficients may depend on $1$st-order jet variables, i.e.
$\mathcal{F}\in C^{\infty}(\mathcal{J}^{1}(\mathcal{E}))$. In particular,
we focus on the formal change of the corresponding integrated quantity
$\mathscr{F}=\int_{0}^{L}\mathcal{F}\mathrm{d}z$ along solutions
of a generalised vertical vector field, where for the calculation
we exploit the so-called Lie derivative reading as $\mathrm{L}_{v}(w)$
for a differential form $w$. Hence, for $1$st-order densities the
formal change can be decomposed according to
\begin{equation}
\dot{\mathscr{F}}=\int_{0}^{L}\mathrm{L}_{j^{1}\left(v\right)}(\mathcal{F}\Omega)=\int_{0}^{L}v\rfloor\delta\mathfrak{F}+\left.(v\rfloor\delta^{\partial}\mathfrak{F})\right|_{0}^{L}\label{eq:decomposition}
\end{equation}
by using integration by parts and Stoke's theorem. In (\ref{eq:decomposition}),
the map $\delta\mathfrak{F}=\delta_{\alpha}\mathcal{F}\mathrm{d}x^{\alpha}\wedge\mathrm{d}z$
is called variational derivative and is given as $\delta_{\alpha}\mathcal{F}=\partial_{\alpha}\mathcal{F}-d_{z}(\partial_{\alpha}^{z}\mathcal{F})$
in local coordinates, while the boundary operator locally reads as
$\delta_{\alpha}^{\partial}\mathcal{F}=\partial_{\alpha}^{z}\mathcal{F}$.

\section{Port-Hamiltonian Framework}

As already mentioned, the pH-system representation in the infinite-dimensional
scenario is not unique. In the following, two different approaches
that have proven to be adequate frameworks in particular with respect
to control-engineering purposes are presented, where both approaches
rely on energy considerations. Although the structures of the considered
system representations are quite different -- stemming from the fact
that different state variables are used --, it should be stressed
that the governing physic, i.e. the underlying system of PDEs, is
the same.

\subsection{Geometric Approach based on Jet-Bundle Structure}

First, a pH-system representation that is particularly suitable for
systems that allow for a variational characterisation is discussed.
The approach exploits an underlying jet-bundle structure and makes
heavy use of a certain power-balance relation, see \cite{Ennsbrunner2005,Schoeberl2008a,Schoeberl2014}
for instance. To this end, we consider the bundle $\pi:(z,x^{\alpha})\rightarrow(z)$.
Then, a pH-system with $1$st-order Hamiltonian $\mathfrak{H}=\mathcal{H}\mathrm{d}z$,
i.e. $\mathcal{H}\in C^{\infty}(\mathcal{J}^{1}(\mathcal{E}))$, including
in- and outputs on the domain can be given as\begin{subequations}\label{eq:pH_sys_jetbundle}
\begin{align}
\dot{x} & =(\mathcal{J}-\mathcal{R})(\delta\mathfrak{H})+u\rfloor\mathcal{G}\label{eq:pH_sys_dynamics}\\
y & =\mathcal{G}^{*}\rfloor\delta\mathfrak{H}\,
\end{align}
\end{subequations}together with appropriate boundary conditions.
It should be stressed that the linear operators $\mathcal{J},\mathcal{R}:\mathcal{T}^{*}(\mathcal{E})\wedge\mathcal{T}^{*}(\mathcal{B})\rightarrow\mathcal{V}(\mathcal{E})$,
describing the internal power flow and the dissipation effects of
the system, respectively, as well as the input operator $\mathcal{G}:\mathcal{U}\rightarrow\mathcal{V}\left(\mathcal{E}\right)$
in general can be differential operators. However, in this contribution
it is sufficient to use bounded linear mappings, where the coefficients
of the skew-symmetric interconnection tensor $\mathcal{J}$ satisfy
$\mathcal{J}^{\alpha\beta}=-\mathcal{J}^{\beta\alpha}\in C^{\infty}(\mathcal{J}^{2}(\mathcal{E}))$,
while $\mathcal{R}^{\alpha\beta}=\mathcal{R}^{\beta\alpha}\in C^{\infty}(\mathcal{J}^{2}(\mathcal{E}))$
and $\left[\mathcal{R}^{\alpha\beta}\right]\geq0$ is valid for the
coefficient matrix of the symmetric and positive semidefinite dissipation
map $\mathcal{R}$. The input map $\mathcal{G}$, where the components
$\mathcal{G}_{\xi}^{\alpha}$ may depend (amongst others) on the spatial
coordinate $z$, enables to incorporate external inputs located within
the spatial domain. While we focus on systems with lumped inputs
$u^{\xi}\in\mathcal{U}$,  the output components $y_{\xi}\in\mathcal{Y}$
can be interpreted as distributed output densities because $\mathcal{G}_{\xi}^{\alpha}$
are the components of the adjoint output map $\mathcal{G}^{*}:\mathcal{T}^{*}(\mathcal{E})\wedge\mathcal{T}^{*}(\mathcal{B})\rightarrow\mathcal{Y}$
as well. Since the input bundle $\rho:\mathcal{U}\rightarrow\mathcal{J}^{2}(\mathcal{E})$
is dual to the output bundle $\varrho:\mathcal{Y}\rightarrow\mathcal{J}^{2}(\mathcal{E})$,
see \cite[Section 4]{Ennsbrunner2005} or \cite[Section 3]{Schoeberl2008a},
one can deduce the important relation
\begin{equation}
(u\rfloor\mathcal{G})\rfloor\delta\mathfrak{H}=u\rfloor(\mathcal{G}^{*}\rfloor\delta\mathfrak{H})=u\rfloor y\,.\label{eq:distributed_collocation}
\end{equation}
Thus, by replacing $\mathfrak{F}$ by $\mathfrak{H}$ in (\ref{eq:decomposition})
and substituting (\ref{eq:pH_sys_dynamics}) for $v$, for the system
class under consideration the formal change of the Hamiltonian functional
$\mathscr{H}=\int_{0}^{L}\mathcal{H}\mathrm{d}z$ follows to
\begin{equation}
\dot{\mathscr{H}}=-\int_{0}^{L}\mathcal{R}(\delta\mathfrak{H})\rfloor\delta\mathfrak{H}+\int_{0}^{L}u\rfloor y+\left.(v\rfloor\delta^{\partial}\mathfrak{H})\right|_{0}^{L}\,,\label{eq:H_p}
\end{equation}
where the first part describes the energy that is dissipated and the
remaining parts correspond to collocation on the domain as well as
on the boundary. Moreover, if we introduce a local representation
for (\ref{eq:pH_sys_jetbundle}) as\begin{subequations}\label{eq:pH_sys_JB_local}
\begin{align}
\dot{x}^{\alpha} & =(\mathcal{J}^{\alpha\beta}-\mathcal{R}^{\alpha\beta})\delta_{\beta}\mathcal{H}+\mathcal{G}_{\xi}^{\alpha}u^{\xi}\label{eq:pH_sys_loc_dyn_JB}\\
y_{\xi} & =\mathcal{G}_{\xi}^{\alpha}\delta_{\alpha}\mathcal{H}\,,
\end{align}
\end{subequations}with $\alpha,\beta=1,\ldots,n$ and $\xi=1,\ldots,m$,
the power-balance relation (\ref{eq:H_p}) can be stated as
\[
\dot{\mathscr{H}}=-\int_{0}^{L}\delta_{\alpha}(\mathcal{H})\mathcal{R}^{\alpha\beta}\delta_{\beta}(\mathcal{H})\mathrm{d}z+\int_{0}^{L}u^{\xi}y_{\xi}\mathrm{d}z+\left.(\dot{x}^{\alpha}\delta_{\alpha}^{\partial}\mathcal{H})\right|_{0}^{L}\,.
\]
At this point it should be mentioned that in this contribution we
consider systems with trivial boundary conditions, and therefore,
the boundary ports $(\dot{x}^{\alpha}\delta_{\alpha}^{\partial}\mathcal{H})|_{0}^{L}$
vanish for the considered systems. However, the boundary terms could
easily be determined by applying the boundary operator $\delta_{\alpha}^{\partial}$,
which will also play an important role for the determination of so-called
Casimir conditions in Subsection 4.1.

\subsection{Approach based on Stokes-Dirac Structure}

In this subsection, we present the pH-approach that relies on Stokes-Dirac
structures and is closely related to functional analysis, see e.g.
\cite{Gorrec2005,Jacob2012} for a detailed analysis regarding the
well-posedness and stability of boundary-controlled pH-systems. Here,
we consider pH-systems according to\begin{subequations}\label{eq:pH_sys_SD}
\begin{multline}
\partial_{t}\chi(z,t)=P_{1}\partial_{z}(\mathcal{Q}(z)\chi(z,t))+\ldots\\
+(P_{0}-G_{0})(\mathcal{Q}(z)\chi(z,t))+\mathcal{B}(z)u(t)\label{eq:pH_sys_dynamics_SD}
\end{multline}
together with the collocated output densities
\begin{align}
y & =\mathcal{B}^{T}(z)\mathcal{Q}(z)\chi(z,t)\label{eq:pH_sys_output_densities_SD}
\end{align}
and appropriate boundary conditions given as
\begin{equation}
W_{\mathcal{B}}\left[\begin{array}{cc}
f_{\partial}^{T}(t) & e_{\partial}^{T}(t)\end{array}\right]^{T}=0\,.\label{eq:boundary_conditions_SD}
\end{equation}
\end{subequations}In (\ref{eq:pH_sys_dynamics_SD}), the input $u\in\mathbb{R}^{m}$
depends on the time $t$ solely, i.e. we restrict ourselves to lumped
inputs, whereas the input operator $\mathcal{B}(z)$ depends on the
spatial coordinate $z$, implying that the output densities (\ref{eq:pH_sys_output_densities_SD})
might be distributed over the spatial domain $z\in\left[0,L\right]$.
It is worth stressing that we intentionally use the same notation
for the input $u$ and the output $y$ as for (\ref{eq:pH_sys_jetbundle}),
since we assume that the inputs and outputs are the same in both frameworks.
However, as already mentioned, the two approaches mainly differ in
the choice of the variables which shall be highlighted by denoting
the system state $\chi\left(z,t\right)\in\mathbb{R}^{n}$. For the
matrices in (\ref{eq:pH_sys_dynamics_SD}) we have that $P_{1}=P_{1}^{T}\in\mathbb{R}^{n\times n}$
and is invertible, $P_{0}=-P_{0}^{T}\in\mathbb{R}^{n\times n}$, $\mathbb{R}^{n\times n}\ni G_{0}=G_{0}^{T}\geq0$,
and $\mathcal{Q}(\cdot)\in L^{2}([0,L];\mathbb{R}^{n\times n})$
denotes a bounded and continuously differentiable matrix-valued function,
where $\mathcal{Q}(z)=\mathcal{Q}^{T}(z)$ and $mI\leq\mathcal{Q}(z)\leq MI$,
with the constants $m,M>0$, is valid for all $z\in\left[0,L\right]$.
Note that $\chi$ and $\mathcal{Q}$ are often used instead of $\chi(z,t)$
and $\mathcal{Q}(z)$ for the sake of simplicity. Moreover, the state
space is $X=L^{2}([0,L];\mathbb{R}^{n})$, which is equipped with
the inner product $\left\langle \chi_{1},\chi_{2}\right\rangle _{\mathcal{Q}}=\left\langle \chi_{1},\mathcal{Q}\chi_{2}\right\rangle _{L^{2}}$
and the norm $\left\Vert \chi\right\Vert _{\mathcal{Q}}^{2}=\left\langle \chi,\chi\right\rangle _{\mathcal{Q}}$.
Moreover, the Hamiltonian can be given as $H(t)=\frac{1}{2}\left\Vert \chi\right\Vert _{\mathcal{Q}}^{2}$,
emphasising that the norm is related to the stored energy of a system.
Hence, $\chi(z,t)$ are called energy variables, while $\mathcal{Q}(z)\chi(z,t)$
represents the co-energy variables. To be able to reformulate the
boundary conditions of a system according to (\ref{eq:boundary_conditions_SD}),
the boundary port variables \cite[Eq. (2)]{Macchelli2017}
\begin{equation}
\left[\begin{array}{c}
f_{\partial}(t)\\
e_{\partial}(t)
\end{array}\right]=\underset{R}{\underbrace{\frac{1}{\sqrt{2}}\left[\begin{array}{cc}
P_{1} & -P_{1}\\
I & I
\end{array}\right]}}\left[\begin{array}{c}
\mathcal{Q}(L)\chi(L,t)\\
\mathcal{Q}(0)\chi(0,t)
\end{array}\right]\,,\label{eq:boundary_port_variables}
\end{equation}
which correspond to a linear combination of the co-energy variables
restricted to the boundary, are introduced, where $I$ denotes the
identity matrix of appropriate dimension. Hence, the time derivative
of $H(t)$ can be deduced to
\begin{equation}
\dot{H}=-\int_{0}^{L}(\mathcal{Q}\chi)^{T}G_{0}\mathcal{Q}\chi\mathrm{d}z+\int_{0}^{L}u^{T}y\mathrm{d}z+B\,,\label{eq:H_p_SD}
\end{equation}
with $B=\frac{1}{2}[(\mathcal{Q}\chi)^{T}P_{1}\mathcal{Q}\chi]_{0}^{L}$,
where the derivation given in \cite[Section 7.2]{Jacob2012} for boundary-control
systems without dissipation can be adopted in a straightforward manner.
Like (\ref{eq:H_p}), the balance equation (\ref{eq:H_p_SD}) decomposes
into dissipation (first term) and into collocation on the domain as
well as on the boundary.

\section{Energy-Based In-Domain Control}

The following section deals with the energy-based design of control
laws for infinite-dimensional systems with in-domain actuation within
both discussed approaches. In particular, we focus on systems with
lumped inputs, where the in-domain actuators exhibit a non-vanishing
spatial distribution, naturally requiring the use of finite-dimensional
controllers. Furthermore, it is of interest to compare the results
obtained within both approaches.

The main idea of the energy-Casimir method is to couple the plant
to a dynamic controller -- which will beneficially be given in a
pH-formulation -- in order to shape the total energy of the closed
loop and to inject damping into the system. The latter can be accomplished
either by means of controller states that are not related to the plant,
where the pH-structure of the controller shall be exploited, see e.g.
\cite{Rams2017a}, or by using an additional input $u'$ in the interconnection
of plant and controller given as
\begin{equation}
u=-y_{c}+u',\qquad u_{c}=\bar{y}\,.\label{eq:interconnection}
\end{equation}
In (\ref{eq:interconnection}), $u_{c}$ and $y_{c}$ denote the
in- and the output of the dynamic controller that will be declared
subsequently, while $\bar{y}$ corresponds to the integrated output
density of the plant.

\subsection{Jet-Bundle Approach}

The idea is now to construct a closed-loop system, by exploiting the
pH-system formulation, that exhibit a desired behaviour. To this
end, we consider a finite-dimensional dynamic controller given in
the pH-formulation\begin{subequations}\label{eq:pH_controller_JB}
\begin{align}
\dot{x}_{c}^{\alpha_{c}} & =(J_{c}^{\alpha_{c}\beta_{c}}-R_{c}^{\alpha_{c}\beta_{c}})\partial_{\beta_{c}}H_{c}+G_{c,\xi}^{\alpha_{c}}u_{c}^{\xi}\,,\\
y_{c,\xi} & =G_{c,\xi}^{\alpha_{c}}\partial_{\alpha_{c}}H_{c}\,,\label{eq:pH_controller_output_JB}
\end{align}
\end{subequations}with $\alpha_{c},\beta_{c}=1,\ldots,n_{c}$ and
$\xi=1,\ldots,m$. If we use the interconnection (\ref{eq:interconnection}),
locally given as
\begin{equation}
u^{\xi}=-\delta^{\xi\eta}y_{c,\eta}+u'^{\xi},\quad u_{c}^{\xi}=\delta^{\xi\eta}\int_{0}^{L}y_{\eta}\mathrm{d}z\,,\label{eq:interconnection_JB}
\end{equation}
with the Kronecker-Delta symbol meeting $\delta^{\xi\eta}=1$ for
$\xi=\eta$ and $\delta^{\xi\eta}=0$ for $\xi\neq\eta$, we obtain
a closed-loop system with
\begin{equation}
\mathscr{H}_{cl}(x,x_{c})=\int_{0}^{L}\mathcal{H}\mathrm{d}z+H_{c}\left(x_{c}\right)\,.\label{eq:H_cl}
\end{equation}
Consequently, by using the interconnection (\ref{eq:interconnection_JB})
and taking the fact that we consider systems with in-domain actuation
solely -- i.e. no power flow takes place through the boundary ports
-- into account, the formal change of $\mathscr{H}_{cl}$ follows
to
\begin{multline*}
\dot{\mathscr{H}}_{cl}=-\int_{0}^{L}\delta_{\alpha}(\mathcal{H})\mathcal{R}^{\alpha\beta}\delta_{\beta}(\mathcal{H})\mathrm{d}z+\ldots\\
-\partial_{\alpha_{c}}(H_{c})R^{\alpha_{c}\beta_{c}}\partial_{\beta_{c}}(H_{c})+u'^{\xi}\int_{0}^{L}y_{\xi}\mathrm{d}z\,.
\end{multline*}
\begin{rem}\label{rem:Stability}It should be stressed that a comprehensive
proof of stability for systems governed by PDEs requires functional
analysis, whereas the focus of this contribution is on structural/geometric
considerations. Thus, no detailed stability investigations are presented
here. Nevertheless, in Section 6 we exemplarily sketch the necessary
procedure by means of the observer error of an in-domain actuated
vibrating string. \end{rem}To be able to use the closed-loop Hamiltonian
as Lyapunov candidate, it must be ensured that $\mathscr{H}_{cl}$
exhibits a minimum at the desired equilibrium point, entailing that
the controller-Hamiltonian has to be designed properly. Therefore,
we exploit Casimir functionals of the form
\begin{equation}
\mathscr{C}^{\lambda}=x_{c}^{\lambda}+\int_{0}^{L}\mathcal{C}^{\lambda}\mathrm{d}z\,,\label{eq:Casimir_JB}
\end{equation}
where it should be stressed that they in general may depend on $1$st-order
jet variables, i.e. $\mathcal{C}^{\lambda}\in C^{\infty}(\mathcal{J}^{1}(\mathcal{E}))$.
Thus, (\ref{eq:Casimir_JB}) has to fulfil $\dot{\mathscr{C}}^{\lambda}=0$
independently of the system-energy function in order to serve as conserved
quantity. Consequently, due to $\mathscr{C}^{\lambda}=\kappa^{\lambda}$,
where the constants $\kappa^{\lambda}=\mathscr{C}^{\lambda}|_{t=t_{0}}$
depend on the initial states of the plant and the controller, we can
express the controller states that are related to the plant as $x_{c}^{\lambda}=\kappa^{\lambda}-\int_{0}^{L}\mathcal{C}^{\lambda}\mathrm{d}z$.
Hence, if each controller state is related to the plant, we are able
to write $\mathscr{H}_{cl}=\mathscr{H}\left(x\right)+H_{c}\left(x\right)$
indicating that we are able to shape the minimum of the closed loop.
However, it should be mentioned that $\kappa^{\lambda}$ cannot be
determined exactly when the initial conditions of the plant are not
known precisely, which would yield an offset in the resulting control
law, implying a deviation regarding the desired equilibrium. To overcome
this drawback, we design a distributed parameter observer in Section
5, as the fact that the estimated state will converge to the real
one implies that these offset vanishes.

\begin{prop}\label{prop:Casimir_Conditions_JetBundle}Consider the
closed loop that results due to the interconnection of the plant (\ref{eq:pH_sys_JB_local})
and the controller (\ref{eq:pH_controller_JB}) by means of (\ref{eq:interconnection_JB})
with $u'^{\xi}=0$. Then, if the functionals (\ref{eq:Casimir_JB})
fulfil the conditions\begin{subequations}\label{eq:Casimir_Conditions_JB}
\begin{align}
(J_{c}^{\lambda\beta_{c}}-R_{c}^{\lambda\beta_{c}}) & =0\\
\delta_{\alpha}\mathcal{C}^{\lambda}(\mathcal{J}^{\alpha\beta}-\mathcal{R}^{\alpha\beta})+G_{c,\xi}^{\lambda}K^{\xi\eta}\mathcal{G}_{\eta}^{\beta} & =0\label{eq:Casimir_Cond_domain_JB}\\
\delta_{\alpha}\mathcal{C}^{\lambda}\mathcal{G}_{\xi}^{\alpha}K^{\xi\eta}G_{c,\eta}^{\alpha_{c}} & =0\label{eq:Casimir_Input_Obstacle_JB}\\
(\dot{x}^{\alpha}\delta_{\alpha}^{\partial}\mathcal{C}^{\lambda})|_{0}^{L} & =0\label{eq:Casimir_Cond_Boundary_JB}
\end{align}
\end{subequations}for $\lambda=1,\ldots,\bar{n}\leq n_{c}$, they
serve as conserved quantities.\end{prop}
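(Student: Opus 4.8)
The plan is to compute the total time derivative $\dot{\mathscr{C}}^{\lambda}$ of the functional (\ref{eq:Casimir_JB}) explicitly and to show that the four conditions (\ref{eq:Casimir_Conditions_JB}) are precisely what is needed to make it vanish for every admissible plant- and controller-Hamiltonian. First I would split $\dot{\mathscr{C}}^{\lambda}=\dot{x}_{c}^{\lambda}+\frac{\mathrm{d}}{\mathrm{d}t}\int_{0}^{L}\mathcal{C}^{\lambda}\mathrm{d}z$ and treat the two summands separately. For the lumped part, substituting the controller dynamics (\ref{eq:pH_controller_JB}) together with the closed-loop interconnection (\ref{eq:interconnection_JB}) (with $u'^{\xi}=0$, so that $u_{c}^{\xi}=K^{\xi\eta}\int_{0}^{L}y_{\eta}\mathrm{d}z$) turns $\dot{x}_{c}^{\lambda}$ into the controller drift $(J_{c}^{\lambda\beta_{c}}-R_{c}^{\lambda\beta_{c}})\partial_{\beta_{c}}H_{c}$ plus the coupling term $G_{c,\xi}^{\lambda}K^{\xi\eta}\int_{0}^{L}\mathcal{G}_{\eta}^{\beta}\delta_{\beta}\mathcal{H}\,\mathrm{d}z$ obtained from the plant output $y_{\eta}=\mathcal{G}_{\eta}^{\beta}\delta_{\beta}\mathcal{H}$.

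For the distributed part, the central tool is the decomposition (\ref{eq:decomposition}): since $\mathcal{C}^{\lambda}$ is a first-order density, its formal change along the plant velocity field $v^{\alpha}=\dot{x}^{\alpha}$ splits into the domain integral $\int_{0}^{L}\dot{x}^{\alpha}\delta_{\alpha}\mathcal{C}^{\lambda}\mathrm{d}z$ and the boundary contribution $(\dot{x}^{\alpha}\delta_{\alpha}^{\partial}\mathcal{C}^{\lambda})|_{0}^{L}$. Into the domain integral I insert the plant dynamics (\ref{eq:pH_sys_loc_dyn_JB}) with the interconnection $u^{\xi}=-K^{\xi\eta}y_{c,\eta}=-K^{\xi\eta}G_{c,\eta}^{\alpha_{c}}\partial_{\alpha_{c}}H_{c}$, which separates it into a term carrying $\delta_{\beta}\mathcal{H}$ through $(\mathcal{J}^{\alpha\beta}-\mathcal{R}^{\alpha\beta})$ and a term carrying the controller gradient $\partial_{\alpha_{c}}H_{c}$ through the input coupling $\mathcal{G}_{\xi}^{\alpha}K^{\xi\eta}G_{c,\eta}^{\alpha_{c}}$.

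Collecting all contributions and grouping them by the gradient they multiply is the heart of the argument. The terms proportional to $\delta_{\beta}\mathcal{H}$ combine into $\int_{0}^{L}[\delta_{\alpha}\mathcal{C}^{\lambda}(\mathcal{J}^{\alpha\beta}-\mathcal{R}^{\alpha\beta})+G_{c,\xi}^{\lambda}K^{\xi\eta}\mathcal{G}_{\eta}^{\beta}]\delta_{\beta}\mathcal{H}\,\mathrm{d}z$; the purely controller-driven term is $(J_{c}^{\lambda\beta_{c}}-R_{c}^{\lambda\beta_{c}})\partial_{\beta_{c}}H_{c}$; the input-coupling term is $-\int_{0}^{L}\delta_{\alpha}\mathcal{C}^{\lambda}\mathcal{G}_{\xi}^{\alpha}K^{\xi\eta}G_{c,\eta}^{\alpha_{c}}\partial_{\alpha_{c}}H_{c}\,\mathrm{d}z$; and the remaining boundary term is $(\dot{x}^{\alpha}\delta_{\alpha}^{\partial}\mathcal{C}^{\lambda})|_{0}^{L}$. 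Because $\mathscr{C}^{\lambda}$ must be conserved independently of the chosen energy functions, and since $\delta_{\beta}\mathcal{H}$ and $\partial_{\alpha_{c}}H_{c}$ may be prescribed arbitrarily, the coefficient of each gradient has to vanish on its own. By the fundamental lemma of the calculus of variations the integrands must vanish pointwise in $z$, which yields exactly (\ref{eq:Casimir_Cond_domain_JB}) from the $\delta\mathcal{H}$-terms, the first equation of (\ref{eq:Casimir_Conditions_JB}) from the controller drift, and (\ref{eq:Casimir_Input_Obstacle_JB}) from the input coupling, while the boundary requirement is (\ref{eq:Casimir_Cond_Boundary_JB}); substituting them back gives $\dot{\mathscr{C}}^{\lambda}=0$.

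The step I expect to be most delicate is the correct use of (\ref{eq:decomposition}) for $\frac{\mathrm{d}}{\mathrm{d}t}\int_{0}^{L}\mathcal{C}^{\lambda}\mathrm{d}z$, i.e. recognising the plant velocity as the generalised vertical vector field driving the change of the Casimir density and carefully bookkeeping the boundary operator $\delta_{\alpha}^{\partial}$. The accompanying conceptual point is the justification for splitting what is, before grouping, a single coefficient of $\partial_{\alpha_{c}}H_{c}$ into the two independent algebraic requirements given by the first line of (\ref{eq:Casimir_Conditions_JB}) and by (\ref{eq:Casimir_Input_Obstacle_JB}): this separation is legitimate precisely because the controller drift and the plant-mediated coupling have structurally different dependences (lumped versus spatially integrated), so that conservation independent of the energy function cannot rely on their mutual cancellation.
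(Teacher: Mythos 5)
Your computation is correct and follows exactly the route the paper intends: the paper itself only cites \cite{Malzer2019} for this proposition, but its in-text proof of the analogous Stokes--Dirac result (Prop.~2) proceeds by the same steps you take --- differentiate the functional, substitute plant and controller dynamics together with the interconnection, apply the decomposition (\ref{eq:decomposition}) (integration by parts) to the distributed part, and group by $\delta_{\beta}\mathcal{H}$ and $\partial_{\alpha_{c}}H_{c}$. One minor remark: since the proposition only asserts sufficiency, your closing discussion of why the controller-drift term and the input-coupling term cannot cancel (and the appeal to the fundamental lemma) is not actually needed --- if each of the four conditions holds, their sum trivially vanishes; those arguments would only be required to establish necessity of the conditions.
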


\begin{proof}For the proof we refer to \cite{Malzer2019}.\end{proof}

\subsection{Stokes-Dirac Approach}

Next, we want to deduce a control law based on the energy-Casimir
method within the Stokes-Dirac framework. Thus, we consider a dynamic
controller given in the pH-form\begin{subequations}\label{eq:pH_controller_SD}
\begin{align}
\dot{v}_{c} & =(A_{c}-S_{c})Q_{c}v_{c}+B_{c}u_{c}\label{eq:pH_controller_dynamics_SD}\\
\check{y}_{c} & =B_{c}^{T}Q_{c}v_{c}\label{eq:pH_controller_output_SD}
\end{align}
\end{subequations}with $v_{c}\in\mathbb{R}^{n_{c}}$, where it should
be noted that the controller input $u_{c}\in\mathbb{R}^{m}$ is the
same as for (\ref{eq:pH_controller_JB}), but the output $\check{y}_{c}\in\mathbb{R}^{m}$
might be different as (\ref{eq:pH_controller_output_JB}). In (\ref{eq:pH_controller_dynamics_SD}),
we have the Hamiltonian $\bar{H}_{c}=\frac{1}{2}v_{c}^{T}Q_{c}v_{c}$,
$A_{c}=-A_{c}^{T}$ and $S_{c}=S_{c}^{T}\geq0$. The interconnection
of (\ref{eq:pH_controller_SD}) and the plant (\ref{eq:pH_sys_SD})
according to (\ref{eq:interconnection}) enables that the closed-loop
system can be written as a pH-system characterised by the Hamiltonian
$H_{cl}(\chi,v_{c})=\frac{1}{2}\left\Vert \chi\right\Vert _{\mathcal{Q}}^{2}+\frac{1}{2}v_{c}^{T}Q_{c}v_{c}$.
To be able to porperly shape $H_{cl}(\chi,v_{c})$, we introduce Casimir
functions of the form
\begin{equation}
C=\Gamma^{T}v_{c}+\int_{0}^{L}\Psi^{T}\left(z\right)\chi\mathrm{d}z\,,\label{eq:Casimir_SD}
\end{equation}
representing a special case of (\ref{eq:Casimir_JB}) if $\Gamma$
is a unit vector.

\begin{prop}\label{prop:Casimir_Conditions_StokesDirac}The functionals
(\ref{eq:Casimir_SD}) serve as structural invariants of the closed
loop, stemming from the interconnection of the plant (\ref{eq:pH_sys_SD})
and the controller (\ref{eq:pH_controller_SD}) via (\ref{eq:interconnection})
with $u'=0$, if they meet the conditions\begin{subequations}\label{eq:Casimir_Conditions_SD}
\begin{align}
(A_{c}+S_{c})\Gamma & =0\label{eq:Casimir_cond_controller_SD}\\
P_{1}\partial_{z}\Psi+\left(P_{0}+G_{0}\right)\Psi-\mathcal{B}B_{c}^{T}\Gamma & =0\label{eq:Casimir_cond_domain_SD}\\
B_{c}\mathcal{B}^{T}\Psi & =0\label{eq:Casimir_Input_Obstacle_SD}\\
\left[\begin{array}{cc}
e_{\partial} & f_{\partial}\end{array}\right]R\left[\begin{array}{cc}
\Psi(L) & \Psi(0)\end{array}\right]^{T} & =0\label{eq:Casimir_Cond_Boundary_SD}
\end{align}
\end{subequations}\end{prop}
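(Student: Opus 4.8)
The plan is to verify that $\dot{C}=0$ holds as a structural property, i.e.\ independently of the particular choice of the Hamiltonian. First I would substitute the interconnection (\ref{eq:interconnection}) with $u'=0$ --- namely $u=-\check{y}_c$ and $u_c=\bar{y}=\int_0^L\mathcal{B}^T\mathcal{Q}\chi\,\mathrm{d}z$ --- into the plant (\ref{eq:pH_sys_SD}) and the controller (\ref{eq:pH_controller_SD}) to obtain the closed-loop dynamics. Differentiating (\ref{eq:Casimir_SD}) then gives $\dot{C}=\Gamma^T\dot{v}_c+\int_0^L\Psi^T\partial_t\chi\,\mathrm{d}z$, into which the closed-loop expressions are inserted.

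The key observation is that a structural invariant must be conserved for \emph{every} Hamiltonian, so the co-energy variables $\mathcal{Q}\chi$ and $Q_cv_c$ can be treated as free, and $\dot{C}$ vanishes only if the coefficients multiplying them vanish separately. To isolate these coefficients I would apply integration by parts to the distributed term $\int_0^L\Psi^TP_1\partial_z(\mathcal{Q}\chi)\,\mathrm{d}z$, moving the spatial derivative onto $\Psi$ and producing the boundary contribution $[\Psi^TP_1\mathcal{Q}\chi]_0^L$. Collecting the distributed coefficient of $\mathcal{Q}\chi$ --- composed of the feedback term $\Gamma^TB_c\mathcal{B}^T$, the integrated-by-parts term $-(\partial_z\Psi)^TP_1$ and $\Psi^T(P_0-G_0)$ --- and transposing, while using $P_1=P_1^T$, $P_0=-P_0^T$ and $G_0=G_0^T$, reproduces exactly the domain condition (\ref{eq:Casimir_cond_domain_SD}). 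Similarly, the coefficient of $Q_cv_c$ together with $A_c=-A_c^T$ and $S_c=S_c^T$ leads to $(A_c+S_c)\Gamma+B_c\int_0^L\mathcal{B}^T\Psi\,\mathrm{d}z=0$, which is guaranteed by the two sufficient conditions (\ref{eq:Casimir_cond_controller_SD}) and (\ref{eq:Casimir_Input_Obstacle_SD}), the latter enforcing in pointwise form that the input-obstacle integral vanishes.

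The step I expect to be most delicate is the treatment of the boundary term $[\Psi^TP_1\mathcal{Q}\chi]_0^L$. Here the plan is to re-express it through the boundary port variables (\ref{eq:boundary_port_variables}): substituting $\mathcal{Q}(L)\chi(L)$ and $\mathcal{Q}(0)\chi(0)$ by $f_\partial,e_\partial$ via the matrix $R$ and exploiting $P_1=P_1^T$, a short computation should identify $[\Psi^TP_1\mathcal{Q}\chi]_0^L$ with the quadratic pairing $\begin{bmatrix}e_\partial & f_\partial\end{bmatrix}R\begin{bmatrix}\Psi(L) & \Psi(0)\end{bmatrix}^T$, so that requiring its vanishing subject to the homogeneous boundary conditions (\ref{eq:boundary_conditions_SD}) is precisely condition (\ref{eq:Casimir_Cond_Boundary_SD}). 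Once all four coefficient groups are forced to zero, $\dot{C}=0$ follows for an arbitrary Hamiltonian, establishing that (\ref{eq:Casimir_SD}) is a structural invariant. The argument runs parallel to that of Proposition~\ref{prop:Casimir_Conditions_JetBundle}, the essential differences being the energy-variable description and the boundary pairing induced by $R$ rather than the jet-bundle operator $\delta_\alpha^\partial$.
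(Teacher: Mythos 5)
Your proposal is correct and follows essentially the same route as the paper's proof: differentiate $C$ along the interconnected dynamics, integrate the $P_1\partial_z(\mathcal{Q}\chi)$ term by parts, set the coefficients of $\mathcal{Q}\chi$ and $Q_cv_c$ to zero (using the symmetry/skew-symmetry of $P_1$, $P_0$, $G_0$, $A_c$, $S_c$), and rewrite the resulting boundary term through the port variables and the matrix $R$. Your remark that the controller-side condition naturally appears as $(A_c+S_c)\Gamma+B_c\int_0^L\mathcal{B}^T\Psi\,\mathrm{d}z=0$, which the separate conditions (\ref{eq:Casimir_cond_controller_SD}) and (\ref{eq:Casimir_Input_Obstacle_SD}) are sufficient to enforce, is a slightly more explicit bookkeeping of what the paper states summarily, but it is not a different argument.
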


\begin{proof}Next, we show the proof of Prop. \ref{prop:Casimir_Conditions_StokesDirac},
which is a trivial adaptation of the boundary-control case, and hence,
we deduce the formal change of (\ref{eq:Casimir_SD}). Therefore,
if we substitute (\ref{eq:pH_sys_SD}), (\ref{eq:pH_controller_SD})
and the interconnection (\ref{eq:interconnection}) in $\dot{C}=\Gamma^{T}\dot{v}_{c}+\int_{a}^{b}\Psi^{T}\partial_{t}\chi\mathrm{d}z$,
an integration by parts yields 
\begin{multline*}
\dot{C}=\Gamma^{T}(A_{c}-S_{c})Q_{c}v_{c}+\ldots\\
\int_{0}^{L}(\Gamma^{T}B_{c}\mathcal{B}^{T}-(\partial_{z}\Psi)^{T}P_{1}+\Psi^{T}(P_{0}-G_{0}))\mathcal{Q}x\mathrm{d}z+\ldots\\
-\int_{0}^{L}\Psi^{T}\mathcal{B}B_{c}^{T}Q_{c}v_{c}\mathrm{d}z+(\Psi^{T}P_{1}\mathcal{Q}x)|_{0}^{L}\,.
\end{multline*}
Thus, the conditions (\ref{eq:Casimir_cond_controller_SD}), (\ref{eq:Casimir_cond_domain_SD})
and (\ref{eq:Casimir_Input_Obstacle_SD}) follows immediately by considering
the properties of $A_{c}$, $S_{c}$, $P_{1}$, $P_{0}$, $G_{0}$.
If we rewrite the expression restricted to the boundary as
\[
(\Psi^{T}P_{1}\mathcal{Q}x)|_{0}^{L}=\left[\begin{array}{c}
\Psi(L)\\
\Psi(0)
\end{array}\right]^{T}\left[\begin{array}{cc}
P_{1} & 0\\
0 & -P_{1}
\end{array}\right]\left[\begin{array}{c}
\mathcal{Q}(L)\chi(L,t)\\
\mathcal{Q}(0)\chi(0,t)
\end{array}\right]\,,
\]
and use the relation $R^{T}\Sigma R=\left[\begin{array}{cc}
P_{1} & 0\\
0 & -P_{1}
\end{array}\right]$, by means of (\ref{eq:boundary_port_variables}) we are able to deduce
condition (\ref{eq:Casimir_Cond_Boundary_SD}).\end{proof}

Although the considered approaches are quite different, in fact the
conditions (\ref{eq:Casimir_Conditions_SD}) yield a similar result
as Prop. \ref{prop:Casimir_Conditions_JetBundle}. In particular,
the conditions (\ref{eq:Casimir_cond_domain_SD}) and (\ref{eq:Casimir_Cond_domain_JB})
allows to relate the plant within the domain to the controller, which
is different compared to the Casimir conditions for boundary-control
systems given in \cite[Prop. 3.1]{Macchelli2017}. Like (\ref{eq:Casimir_Input_Obstacle_JB}),
condition (\ref{eq:Casimir_Input_Obstacle_SD}) implies that we cannot
find Casimir functions depending on system states where an input appears
in the corresponding system equation.

\section{Observer Design}

As already mentioned, an uncertain initial configuration of the plant
would cause some problems regarding the Casimir-based control law.
To be able to apply the control scheme anyway, we propose a distributed-parameter
observer in both considered frameworks. The idea is to design an observer
based on energy considerations such that the observer-error system
exhibits a desired behaviour. 

\subsection{Jet-Bundle Approach}

Now, the objective is to design an observer system by exploiting an
underlying jet-bundle structure such that the observer error tends
to zero. To this end, we introduce the bundle $\hat{\pi}:\hat{\mathcal{E}}\rightarrow\hat{\mathcal{B}}$
with coordinates $(\hat{z},\hat{x}^{\hat{\alpha}})$ for $\hat{\mathcal{E}}$
and $(\hat{z})$ for $\hat{\mathcal{B}}$. Next, we extend the copy
of the plant (\ref{eq:pH_sys_JB_local}) by an error-injection term
exploiting the additional input
\begin{equation}
u_{o}^{\eta}=\delta^{\eta\xi}(\bar{y}_{\xi}-\hat{\bar{y}}_{\xi})\,,\label{eq:observer_input_JB}
\end{equation}
and consequently, the observer system can be written as\begin{subequations}\label{eq:observer_JB}
\begin{align}
\dot{\hat{x}}^{\hat{\alpha}} & =(\mathcal{J}^{\hat{\alpha}\hat{\beta}}-\mathcal{R}^{\hat{\alpha}\hat{\beta}})\delta_{\hat{\beta}}\hat{\mathcal{H}}+\mathcal{G}_{\xi}^{\hat{\alpha}}u^{\xi}+\mathcal{K}_{\eta}^{\hat{\alpha}}u_{o}^{\eta}\,,\label{eq:observer_JB_dynamics}
\end{align}
with $\hat{\alpha},\hat{\beta}=1,\ldots,n$ and $\xi,\eta=1,\ldots,m$,
by means of the observer-energy density $\hat{\mathcal{H}}$. In (\ref{eq:observer_input_JB}),
$\bar{y}_{\xi}$, which corresponds to the integrated output density
of the plant according to $\bar{y}_{\xi}=\int_{0}^{L}y_{\xi}\mathrm{d}z$,
is assumed to be available as measurement quantity, while $\hat{\bar{y}}_{\xi}$
represents the copy of the integrated plant output according to $\hat{\bar{y}}_{\xi}=\int_{0}^{L}\hat{y}_{\xi}\mathrm{d}z$
with
\begin{equation}
\hat{y}_{\xi}=\mathcal{G}_{\xi}^{\hat{\alpha}}\delta_{\hat{\alpha}}\hat{\mathcal{H}}\,.\label{eq:observer_JB_output_densiteis}
\end{equation}
\end{subequations}The aim is to design the observer gain $\mathcal{K}_{\eta}^{\hat{\alpha}}$
such that the observer error $\tilde{x}=x-\hat{x}$ tends to $0$.
If we substitute (\ref{eq:pH_sys_loc_dyn_JB}) and (\ref{eq:observer_JB_dynamics})
in $\dot{\tilde{x}}=\dot{x}-\dot{\hat{x}}$, we have\begin{subequations}\label{eq:observer_error_system_JB}
\begin{align}
\dot{\tilde{x}}^{\tilde{\alpha}} & =(\mathcal{J}^{\tilde{\alpha}\tilde{\beta}}-\mathcal{R}^{\tilde{\alpha}\tilde{\beta}})\delta_{\tilde{\beta}}\tilde{\mathcal{H}}-\mathcal{K}_{\xi}^{\tilde{\alpha}}u_{o}^{\xi}\,.
\end{align}
for the dynamics of the observer error. Thus, if we consider
\begin{equation}
\tilde{y}_{\xi}=-\mathcal{K}_{\xi}^{\tilde{\alpha}}\delta_{\tilde{\alpha}}\tilde{\mathcal{H}}\label{eq:observer_error_output_JB}
\end{equation}
\end{subequations}as collocated output densities, the observer error
(\ref{eq:observer_error_system_JB}) can be interpreted as a pH-system
with the error-Hamiltonian $\tilde{\mathscr{H}}=\int_{0}^{L}\tilde{\mathcal{H}}\mathrm{d}z$,
where it should be mentioned that $\tilde{\mathscr{H}}\neq\mathscr{H}+\hat{\mathscr{H}}$.
Thus, the formal change of $\tilde{\mathscr{H}}$ follows to
\[
\dot{\tilde{\mathscr{H}}}=-\int_{0}^{L}\left(\delta_{\tilde{\alpha}}(\tilde{\mathcal{H}})\mathcal{R}^{\tilde{\alpha}\tilde{\beta}}\delta_{\tilde{\beta}}(\tilde{\mathcal{H}})+\delta_{\tilde{\alpha}}(\tilde{\mathcal{H}})\mathcal{K}_{\xi}^{\tilde{\alpha}}\delta^{\xi\eta}(\bar{y}_{\eta}-\hat{\bar{y}}_{\eta})\right)\mathrm{d}z
\]
by means of (\ref{eq:observer_input_JB}). Therefore, by choosing
the components $\mathcal{K}_{\xi}^{\hat{\alpha}}$ properly, it is
possible to achieve an error system with $\dot{\tilde{\mathscr{H}}}\leq0$,
i.e. the observer error is non-increasing. Of course, with regard
to the observer error it is necessary to show that it converges to
$0$, which is discussed in Section 6 for a concrete example.

\subsection{Stokes-Dirac Approach}

Next, we intend to exploit the framework proposed in Subsection 3.2
in order to deduce a distributed-parameter observer for systems with
in-domain actuation and collocated measurement. Again, the main idea
is to extend the copy of the plant by an error-injection term --
where like in Subsection 5.1 it is assumed that $\bar{y}$ is available
as measurement quantity -- and determine the observer gain such that
the observer error converges to $0$. In general, the infinite-dimensional
observer system can be introduced as\begin{subequations}
\begin{multline}
\partial_{t}\hat{\chi}(z,t)=P_{1}\partial_{z}(\mathcal{Q}(z)\hat{\chi}(z,t))+(P_{0}-G_{0})(\mathcal{Q}(z)\hat{\chi}(z,t))\\
+\mathcal{B}(z)u(t)+\mathcal{L}(z)(\bar{y}-\hat{\bar{y}})\,,\label{eq:observer_dynamics_SD}
\end{multline}
together with the boundary conditions
\begin{equation}
W_{\mathcal{B}}\left[\begin{array}{cc}
\hat{f}_{\partial}^{T}(t) & \hat{e}_{\partial}^{T}(t)\end{array}\right]^{T}=0\,,
\end{equation}
\end{subequations}where the corresponding boundary port variables
can be deduced according to (\ref{eq:boundary_port_variables}) with
the observer state $\hat{\chi}$. In (\ref{eq:observer_dynamics_SD}),
we have the same $P_{1}$, $P_{0}$, $G_{0}$, $\mathcal{Q}$ and
$\mathcal{B}$ as defined for (\ref{eq:pH_sys_dynamics_SD}), and
$\mathcal{L}(z)\in L^{2}([0,L];\mathbb{R}^{n\times m})$ denotes
the observer gain. If we define the observer error $\tilde{\chi}=\chi-\hat{\chi}$,
the dynamics of the observer error can be formulated as pH-system\begin{subequations}
\begin{multline}
\partial_{t}\tilde{\chi}(z,t)=P_{1}\partial_{z}(\mathcal{Q}(z)\tilde{\chi}(z,t))+\ldots\\
+(P_{0}-G_{0})(\mathcal{Q}(z)\tilde{\chi}(z,t))-\mathcal{L}(z)(\bar{y}-\hat{\bar{y}})\,,
\end{multline}
together with the collocated output density
\begin{equation}
\tilde{y}=-\mathcal{L}^{T}(z)\mathcal{Q}\tilde{\chi}
\end{equation}
and the boundary conditions
\begin{equation}
W_{\mathcal{B}}\left[\begin{array}{cc}
\tilde{f}_{\partial}^{T}(t) & \tilde{e}_{\partial}^{T}(t)\end{array}\right]^{T}=0\,.\label{eq:observer_error_boundary_condition_SD}
\end{equation}
\end{subequations}To properly design $\mathcal{L}(z)$, we consider
the formal change of the error energy $\tilde{H}=\frac{1}{2}\left\Vert \tilde{\chi}\right\Vert _{\mathcal{Q}}^{2}$,
which follows to
\[
\dot{\tilde{H}}=-\int_{0}^{L}(\mathcal{Q}\tilde{\chi})^{T}G_{0}\mathcal{Q}\tilde{\chi}\mathrm{d}z-\int_{0}^{L}(\bar{y}-\hat{\bar{y}})^{T}\mathcal{L}^{T}\mathcal{Q}\tilde{\chi}\mathrm{d}z
\]
by taking the boundary conditions (\ref{eq:observer_error_boundary_condition_SD})
into account. The objective is to determine $\mathcal{L}(z)$ such
that the observer error $\tilde{\chi}$ tends to $0$, which is demonstrated
for an in-domain actuated vibrating string in the following example.

\section{Example: Vibrating String}

To illustrate the proposed approaches, we consider a vibrating string
actuated within the spatial domain that can be modelled according
to\begin{subequations}
\begin{equation}
\rho\left(z\right)\frac{\partial^{2}w}{\partial t^{2}}=\frac{\partial}{\partial z}(T\left(z\right)\frac{\partial w}{\partial z})+f(z,t)\,,\label{eq:vib_String_eom}
\end{equation}
where $w$ describes the vertical deflection of the string, $\rho\left(z\right)=\textrm{const}$
the mass density and $T\left(z\right)=\textrm{const}$ Young's modulus.
We assume that the string is clamped at $z=0$ and free at $z=L$,
i.e. the boundary conditions
\begin{align}
w\left(0,t\right) & =0\,,\quad T\frac{\partial w}{\partial z}\left(L,t\right)=0\label{eq:BC_VS}
\end{align}
\end{subequations}are valid. The distributed force $f(z,t)=g\left(z\right)u\left(t\right)$
shall be generated by an actuator behaving like a piezoelectric patch,
which is located between $z=z_{p1}$ and $z=z_{p2}$, mathematically
described by $g\left(z\right)=h(z-z_{p1})-h(z-z_{p2})$, with $h\left(\cdot\right)$
denoting the Heaviside function. In fact, the force-distribution on
the domain $z_{p1}\leq z\leq z_{p2}$ is supposed to be constant 
and is scaled by $u\left(t\right)$, which can be interpreted as
an input voltage applied to the actuator. The objective is to stabilise
the desired equilibrium
\begin{equation}
w^{d}=\begin{cases}
az & \text{for}\quad0\leq z<z_{p1}\\
-b(z-z_{p2})^{2}+c & \text{for}\quad z_{p1}\leq z<z_{p2}\\
c & \text{for}\quad z_{p2}\leq z\leq L
\end{cases}\,,\label{eq:equilibrium_w}
\end{equation}
with $a,b>0$ and $c=b(z_{p2}-z_{p1})^{2}+az_{p1}$.

\subsection{Jet-Bundle Approach}

First, we consider the bundle $\pi:(z,w,p)\rightarrow(z)$, where
we have introduced the generalised momenta $p=\rho\dot{w}$. Consequently,
the governing equation of motion reads as
\begin{equation}
\dot{p}=Tw_{zz}+g(z)u\,.\label{eq:VS_JB}
\end{equation}
Hence, by means of the Hamiltonian density $\mathcal{H}=\frac{1}{2\rho}p^{2}+\frac{1}{2}T(w_{z})^{2}$,
we are able to reformulate (\ref{eq:VS_JB}) according to
\begin{align*}
\left[\begin{array}{c}
\dot{w}\\
\dot{p}
\end{array}\right] & =\left[\begin{array}{cc}
0 & 1\\
-1 & 0
\end{array}\right]\left[\begin{array}{c}
\delta_{w}\mathcal{H}\\
\delta_{p}\mathcal{H}
\end{array}\right]+\left[\begin{array}{c}
0\\
g\left(z\right)
\end{array}\right]u,\quad y=g\left(z\right)\frac{p}{\rho}\,.
\end{align*}
The integrated plant-output $\bar{y}=\int_{0}^{L}g\left(z\right)\frac{p}{\rho}\mathrm{d}z$
corresponds to the current through the actuator and is available as
measurement quantity. Furthermore, the formal change of the Hamiltonian
functional $\mathscr{H}$ reduces to $\dot{\mathscr{H}}=\int_{0}^{L}g\left(z\right)\frac{p}{\rho}u\mathrm{d}z$
because of the boundary conditions (\ref{eq:BC_VS}).

\subsubsection{Control Design}

In the following, we are interested in designing a dynamic controller
by exploiting the energy-Casimir method proposed in Subsection 4.1.
As we only have one output of the plant, one controller state shall
be related to the plant. Moreover, due to the fact that we intend
to inject damping by an additional input $u'$, we do not further
extend the dimension of the controller, i.e. $n_{c}=1$. To be able
to shape the closed-loop energy, we choose the ansatz $\mathcal{C}^{1}=-g\left(z\right)w$,
which fulfils the conditions (\ref{eq:Casimir_Conditions_JB}) for
$G_{c}^{1}=1$ and allows for a relation between the plant and the
controller state according to
\begin{equation}
x_{c}^{1}=\int_{0}^{L}g\left(z\right)w\mathrm{d}z\label{eq:xc1}
\end{equation}
by choosing the initial controller states properly. Furthermore, the
conditions (\ref{eq:Casimir_Conditions_JB}) restrict the controller
dynamics to $\dot{x}_{c}^{1}=u_{c}$. If we set the controller-Hamiltonian
to $H_{c}=\frac{1}{2}c_{1}(x_{c}^{1}-x_{c}^{1,d}-\frac{u_{s}}{c_{1}})^{2}$,
where we use the constant $c_{1}>0$ and
\begin{equation}
x_{c}^{1,d}=\int_{0}^{L}g\left(z\right)w^{d}\mathrm{d}z\,,\label{eq:xc1d}
\end{equation}
the equilibrium (\ref{eq:equilibrium_w}) becomes a part of the minimum
of $\mathscr{H}_{cl}$. Worth stressing is the fact that we have
included the term with $u_{s}$ in $H_{c}$ because the equilibrium
(\ref{eq:equilibrium_w}) requires non-zero power. Furthermore, the
output of the dynamic controller follows to $y_{c}=c_{1}(x_{c}^{1}-x_{c}^{1,d}-\frac{u_{s}}{c_{1}})$.
If we use the interconnection $u=-y_{c}+u'$ and $u_{c}=\int_{0}^{L}y\mathrm{d}z$
together with the dissipative output feedback $u'=-c_{2}\bar{y}$,
where $c_{2}>0$, the formal change of $\mathscr{H}_{cl}$ can be
deduced to $\dot{\mathscr{H}}_{cl}=-c_{2}\bar{y}^{2}\leq0$. To allow
for a comparison with the controller that is derived within the Stokes-Dirac
scenario later, we state the total-control law, which can be given
as
\begin{equation}
u=-c_{1}(\int_{0}^{L}g\left(z\right)w\mathrm{d}z-\int_{0}^{L}g\left(z\right)w^{d}\mathrm{d}z)-c_{2}\bar{y}+u_{s}\label{eq:control_law_VS_JB}
\end{equation}
by substituting (\ref{eq:xc1}) and (\ref{eq:xc1d}).

\subsubsection{Observer Design}

Next, an observer for the in-domain actuated vibrating string can
be introduced as
\begin{align*}
\left[\begin{array}{c}
\dot{\hat{w}}\\
\dot{\hat{p}}
\end{array}\right] & =\left[\begin{array}{cc}
0 & 1\\
-1 & 0
\end{array}\right]\left[\begin{array}{c}
\delta_{\hat{w}}\hat{\mathcal{H}}\\
\delta_{\hat{p}}\hat{\mathcal{H}}
\end{array}\right]+\left[\begin{array}{c}
0\\
g\left(z\right)
\end{array}\right]u+\left[\begin{array}{c}
k_{1}\\
k_{2}
\end{array}\right](\bar{y}-\hat{\bar{y}})
\end{align*}
by means of the observer density $\hat{\mathcal{H}}=\frac{1}{2\rho}\hat{p}^{2}+\frac{1}{2}T(\hat{w}_{z})^{2}$,
where $\hat{\bar{y}}=\int_{0}^{L}g(z)\frac{\hat{p}}{\rho}\mathrm{d}z$
corresponds to the copy of the plant output. Next, we introduce the
error coordinates $\tilde{w}=w-\hat{w}$, $\tilde{p}=p-\hat{p}$,
implying that the error dynamics follows to\begin{subequations}\label{eq:observer_error_dynamics_VS}
\begin{align}
\dot{\tilde{w}} & =\frac{1}{\rho}\tilde{p}-k_{1}(\bar{y}-\hat{\bar{y}}),\quad\dot{\tilde{p}}=T\tilde{w}_{zz}-k_{2}(\bar{y}-\hat{\bar{y}})\,.
\end{align}
\end{subequations}Consequently, by means of the energy density of
the observer error $\tilde{\mathcal{H}}=\frac{1}{2\rho}\tilde{p}^{2}+\frac{1}{2}T(\tilde{w}_{z})^{2}$,
the dynamics of the observer error can be given in the pH-formulation\begin{subequations}\label{eq:observer_error_VS_JB}
\begin{align}
\left[\begin{array}{c}
\dot{\tilde{w}}\\
\dot{\tilde{p}}
\end{array}\right] & =\left[\begin{array}{cc}
0 & 1\\
-1 & 0
\end{array}\right]\left[\begin{array}{c}
\delta_{\tilde{w}}\tilde{\mathcal{H}}\\
\delta_{\tilde{p}}\tilde{\mathcal{H}}
\end{array}\right]-\left[\begin{array}{c}
k_{1}\\
k_{2}
\end{array}\right](\bar{y}-\hat{\bar{y}})\,,\\
\tilde{y} & =-\left[\begin{array}{cc}
k_{1} & k_{2}\end{array}\right]\left[\begin{array}{c}
\delta_{\tilde{w}}\tilde{\mathcal{H}}\\
\delta_{\tilde{p}}\tilde{\mathcal{H}}
\end{array}\right]=k_{1}T\tilde{w}_{zz}-k_{2}\frac{\tilde{p}}{\rho}\,.
\end{align}
\end{subequations}A straightforward calculation of the formal change
of the error-Hamiltonian functional $\tilde{\mathscr{H}}$ yields
the relation
\begin{align*}
\dot{\tilde{\mathscr{H}}} & =\int_{0}^{L}(T\tilde{w}_{zz}k_{1}(\bar{y}-\hat{\bar{y}})-\frac{\tilde{p}}{\rho}k_{2}(\bar{y}-\hat{\bar{y}}))\mathrm{d}z\,.
\end{align*}
If we choose $k_{1}=0$ and $k_{2}=kg\left(z\right)$ with $k>0$,
by keeping in mind that $(\bar{y}-\hat{\bar{y}})=\int_{0}^{L}g(z)\frac{1}{\rho}\tilde{p}\mathrm{d}z$
holds, one can conclude that $\dot{\tilde{\mathscr{H}}}(\tilde{w},\tilde{p})=-k(\bar{y}-\hat{\bar{y}})^{2}\leq0$,
and therefore, the observer error is non-increasing. Hence, by initialising
the controller with $x_{c}^{1}(0)=\int_{0}^{L}g(z)\hat{w}(z,0)\mathrm{d}z$
and using $\dot{x}_{c}^{1}=u_{c}=\hat{\bar{y}}$ for the controller
dynamics, the control law (\ref{eq:control_law_VS_JB}) can be interpreted
as $u=-c_{1}(\int_{0}^{L}g\left(z\right)\hat{w}\mathrm{d}z-\int_{0}^{L}g\left(z\right)w^{d}\mathrm{d}z)-c_{2}\bar{y}+u_{s}$.
If $\hat{x}$ converges to $x$, the combination of controller and
observer stabilises the desired equilibrium exactly, and therefore,
in the following we sketch the procedure for proving well-posedness
and asymptotic stability of the observer error.

\subsubsection{Remarks on the Observer Convergence and Simulation Results}

 To analyse the well-posedness and stability of the observer-error
system, we introduce the state vector $\bar{w}=\left[w^{1},w^{2}\right]^{T}=\left[\tilde{w},\tilde{p}\right]^{T}$
together with the state space $\mathcal{W}=H_{C}^{1}(0,L)\times L^{2}(0,L)$,
where $H_{C}^{1}(0,L)=\{w^{1}\in H^{1}(0,L)|w^{1}(0)=0\}$. Furthermore,
we define the (energy) norm $\left\Vert \bar{w}\right\Vert _{\mathcal{W}}^{2}=T\left\langle \tilde{w}_{z},\tilde{w}_{z}\right\rangle _{L^{2}}+\frac{1}{\rho}\left\langle \tilde{p},\tilde{p}\right\rangle _{L^{2}}$,
where it can be shown that it is equivalent to the standard norm.
Next, we reformulate the observer-error dynamics as an abstract Cauchy
problem $\dot{\bar{w}}=\mathcal{A}\bar{w}$, with the operator $\mathcal{A}:\mathcal{D}(\mathcal{A})\subset\mathcal{W}\rightarrow\mathcal{W}$
given as
\[
\mathcal{A}:\left[\begin{array}{c}
\tilde{w}\\
\tilde{p}
\end{array}\right]\rightarrow\left[\begin{array}{c}
\rho^{-1}\tilde{p}\\
T\tilde{w}_{zz}-kg(z)\int_{0}^{L}g(z)\rho^{-1}\tilde{p}\mathrm{d}z
\end{array}\right]\,,
\]
where it can be shown that the inverse operator $\mathcal{A}^{-1}$
exists. Due to the relations $\tilde{\mathscr{H}}=\frac{1}{2}\left\Vert \bar{w}\right\Vert _{\mathcal{W}}^{2}$
and $\dot{\tilde{\mathscr{H}}}\leq0$ it follows that $\mathcal{A}$
is dissipative, and hence, we are able to apply a variant of the Lumer-Phillips
theorem \cite[Thm. 1.2.4]{Liu1999} in order to show that $\mathcal{A}$
generates a $C_{0}$-contraction semigroup. Furthermore, since $\mathcal{A}^{-1}$
is closed -- which can be shown similarly to the proof of Lemma 2.7
in \cite{Stuerzer2016} --, the solution trajectories are precompact
in $\mathcal{W}$, and consequently, we are able to apply LaSalle's
invariance principle (see \cite[Theorem 3.64, 3.65]{Luo1998}), leading
to a similar problem as in \cite[Section 3]{Guo2011}. In fact, it
can be shown that -- by choosing a proper length of the actuator
-- the only possible solution in $\mathcal{S}=\{\bar{w}\in\mathcal{W}|\dot{\tilde{\mathscr{H}}}=0\}$
is the trivial one, and hence, it follows that the observer error
is asymptotically stable. Moreover, in Fig. (\ref{fig:Comp_w_state_obs})
the comparison of the string deflection $w|_{L}$ and the corresponding
observer quantity $\hat{w}|_{L}$ is depicted for $k=30$, where the
observer is initialised with $\hat{w}(z,0)=cz$.
\begin{figure}
\begin{tikzpicture}

\begin{axis}[%
width=6cm,%5.25
height=2.8cm,
at={(0.0in,0.0in)},
%scaled y ticks=base 10:3,
%y tick label style={/pgf/number format/.cd,
%            fixed,
%            fixed zerofill,
%            precision=2,
%       /tikz/.cd}, 
%ytick scale label code/.code={},
xlabel style={yshift=0.1cm},
ylabel style={yshift=-0.1cm},
scale only axis,
%separate axis lines,
%every outer x axis line/.append style={black},
%every x tick label/.append style={font=\color{black}},
xmin=0,
xmax=10,
xlabel={$t~(\text{s})$},
xmajorgrids,
%every outer y axis line/.append style={black},
%every y tick label/.append style={font=\color{black}},
ymin=-0.01,
ymax=0.16,
ylabel={$w|_{L}~(\text{m})$},
ymajorgrids,
axis background/.style={fill=white},
legend style={legend cell align=left,align=left,draw=black}
]

\addplot [color=jkuBlue,solid,line width=1.0pt]
  table[row sep=crcr]{%
0	0\\
0.1	1.32096346172423e-13\\
0.2	2.04642351843963e-08\\
0.3	1.14896619622216e-05\\
0.4	0.000512059498051376\\
0.5	0.00466789319270326\\
0.6	0.0144931866424321\\
0.7	0.0260122810061951\\
0.8	0.0372982221074747\\
0.9	0.0482042028291889\\
1	0.0620546887585488\\
1.1	0.0771068380129743\\
1.2	0.0931963247567162\\
1.3	0.108626324483215\\
1.4	0.123069458241493\\
1.5	0.134789996591847\\
1.6	0.141109105259123\\
1.7	0.144130143774684\\
1.8	0.147885690169963\\
1.9	0.151033404739036\\
2	0.150683687338495\\
2.1	0.145566842560852\\
2.2	0.139463699083744\\
2.3	0.133687278609978\\
2.4	0.130149958238749\\
2.5	0.126357462695275\\
2.6	0.121483212573534\\
2.7	0.115081402209043\\
2.8	0.109704011675966\\
2.9	0.106843350887945\\
3	0.105081075002252\\
3.1	0.103907635483112\\
3.2	0.10306471638099\\
3.3	0.10225043359956\\
3.4	0.100398102290234\\
3.5	0.0985608142988808\\
3.6	0.0969605201669917\\
3.7	0.0971557325922409\\
3.8	0.0970059856134233\\
3.9	0.0940276317283797\\
4	0.0912059016372829\\
4.1	0.0907183147898039\\
4.2	0.0930168450483697\\
4.3	0.0947059239567726\\
4.4	0.0950591215229145\\
4.5	0.094419550352293\\
4.6	0.0948990654965872\\
4.7	0.0959400105391529\\
4.8	0.0979824865111642\\
4.9	0.0992098623681997\\
5	0.0999010334054783\\
5.1	0.100007758213401\\
5.2	0.100059480901164\\
5.3	0.0995427586531467\\
5.4	0.0995239026501461\\
5.5	0.0999054025809462\\
5.6	0.100299588439014\\
5.7	0.10000466816442\\
5.8	0.0988971147015873\\
5.9	0.0988431357195226\\
6	0.100424701774005\\
6.1	0.10227647498451\\
6.2	0.101465418713527\\
6.3	0.100233529052875\\
6.4	0.0996752597695701\\
6.5	0.100597186857267\\
6.6	0.101180200332078\\
6.7	0.100906960934783\\
6.8	0.100688314442511\\
6.9	0.100278313118004\\
7	0.0998333472388037\\
7.1	0.0997522201146361\\
7.2	0.0996469638689949\\
7.3	0.0999215439691197\\
7.4	0.100045223985042\\
7.5	0.100018259280398\\
7.6	0.0996578440408692\\
7.7	0.0994262216990313\\
7.8	0.100128183413336\\
7.9	0.100588038155544\\
8	0.10093238273149\\
8.1	0.0992813420658049\\
8.2	0.0991147731461406\\
8.3	0.0994138082224999\\
8.4	0.100785058214696\\
8.5	0.100259685577856\\
8.6	0.0994697442627611\\
8.7	0.099772811799993\\
8.8	0.0995976662094128\\
8.9	0.100080723955278\\
9	0.0994460865538572\\
9.1	0.100249910578145\\
9.2	0.0999800355435301\\
9.3	0.100050900251757\\
9.4	0.0997877023278819\\
9.5	0.0998277257601999\\
9.6	0.0999745423000142\\
9.7	0.100106086807768\\
9.8	0.100343458341899\\
9.9	0.0997548901017225\\
10	0.0993335175965548\\
};

\addlegendentry{$w|_{L}$}
\addplot [color=jkuDarkGrey,dashed,line width=1.0pt]
  table[row sep=crcr]{%
0	0.1\\
0.1	0.0890923509591369\\
0.2	0.0804765879286487\\
0.3	0.070087151586299\\
0.4	0.0600903425768391\\
0.5	0.0550893846860285\\
0.6	0.0543439251108356\\
0.7	0.0558421357307384\\
0.8	0.0578002639165782\\
0.9	0.0592184502037966\\
1	0.0673429101995522\\
1.1	0.080297590246295\\
1.2	0.0958165479426486\\
1.3	0.111667623896279\\
1.4	0.125604548546399\\
1.5	0.136663336050889\\
1.6	0.143147674686778\\
1.7	0.145491767369608\\
1.8	0.148408458453672\\
1.9	0.149805871750661\\
2	0.144767533074895\\
2.1	0.136333246449129\\
2.2	0.132739946154138\\
2.3	0.129330548722276\\
2.4	0.123565032288519\\
2.5	0.120867271696078\\
2.6	0.117179567047812\\
2.7	0.10977390162181\\
2.8	0.10689417796313\\
2.9	0.103890085635254\\
3	0.105182006704655\\
3.1	0.103667618940175\\
3.2	0.103420035440249\\
3.3	0.10169801634955\\
3.4	0.100273186095196\\
3.5	0.0981369271894567\\
3.6	0.0971733037940809\\
3.7	0.0960180360787987\\
3.8	0.0965984767031129\\
3.9	0.0921901921313578\\
4	0.0905601339289446\\
4.1	0.0915938528646395\\
4.2	0.0947920506096003\\
4.3	0.0946203204718879\\
4.4	0.0940710494318604\\
4.5	0.0958235920809382\\
4.6	0.0955028247443654\\
4.7	0.0956888439941983\\
4.8	0.0989812585152426\\
4.9	0.100255115237326\\
5	0.099450899425663\\
5.1	0.100340275594824\\
5.2	0.0997074373471306\\
5.3	0.0993547261904192\\
5.4	0.100016823257072\\
5.5	0.0995703469605144\\
5.6	0.10028827320079\\
5.7	0.0997252060134931\\
5.8	0.0991589697459549\\
5.9	0.0987358771689181\\
6	0.101532277076927\\
6.1	0.102082292238115\\
6.2	0.101240343269493\\
6.3	0.0992883756575382\\
6.4	0.100589830847003\\
6.5	0.100998469759674\\
6.6	0.100510787008915\\
6.7	0.100638010698079\\
6.8	0.101637346921585\\
6.9	0.0995271835796149\\
7	0.0993638228449733\\
7.1	0.100305189556556\\
7.2	0.0997801715736208\\
7.3	0.0992229134019665\\
7.4	0.101078915538025\\
7.5	0.0990272284953368\\
7.6	0.100002613943671\\
7.7	0.099869472979802\\
7.8	0.0995662485355835\\
7.9	0.101211575634084\\
8	0.100467954138067\\
8.1	0.0991238978402305\\
8.2	0.0989336092299525\\
8.3	0.10015483015031\\
8.4	0.100566435033258\\
8.5	0.100014914482967\\
8.6	0.0993015146504244\\
8.7	0.100005792717878\\
8.8	0.100225183859547\\
8.9	0.0985652185217305\\
9	0.100689649324091\\
9.1	0.100310553234478\\
9.2	0.099237083496797\\
9.3	0.100331969553833\\
9.4	0.100267278770196\\
9.5	0.0989626116992087\\
9.6	0.100704807097463\\
9.7	0.100053041795702\\
9.8	0.0996916661524721\\
9.9	0.100489921295775\\
10	0.0988857063634776\\
};
\addlegendentry{$\hat{w}|_{L}$}
\end{axis}
\end{tikzpicture}%\caption{\label{fig:Comp_w_state_obs}Comparison of the string deflection $w|_{L}$
and the observer state $\hat{w}|_{L}$.}
\end{figure}
In Fig. (\ref{fig:deflection_VS}) it is shown that the propsed
controller in combination with the observer stabilises the desired
equilibrium (\ref{eq:equilibrium_w}) with $a=0.2$, $b=0.5$ and
$c=0.1$. Moreover, we set the string parameters $T$, $\rho$ and
$L$ to $1$. The actuator is placed between $z_{p1}=0.4$ and $z_{p2}=0.6$.
The controller parameters are chosen as $c_{1}=5$ and $c_{2}=30$.
Note that the finite difference-coefficient method has been applied
as discretisation scheme for the plant as well as for the implementation
of the observer.
\begin{figure}
\input{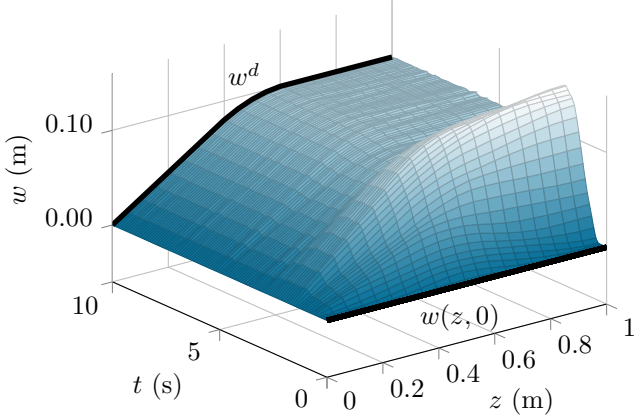}\caption{\label{fig:deflection_VS}Simulation result for the string deflection
$w$ over time $t$ and spatial domain $z$.}

\end{figure}

\subsection{Stokes-Dirac Approach}

As already mentioned, this framework relies on the use of energy variables,
and therefore, the strain $q=\partial_{z}w$ is introduced instead
of the deflection $w$, i.e. we use the state $\chi=\left[q,p\right]^{T}$
to describe the underlying system. Hence, an alternative pH-system
representation for the in-domain actuated vibrating string (\ref{eq:vib_String_eom})
reads as
\begin{equation}
\left[\begin{array}{c}
\dot{q}\\
\dot{p}
\end{array}\right]=\left[\begin{array}{cc}
0 & 1\\
1 & 0
\end{array}\right]\partial_{z}\left(\left[\begin{array}{cc}
T & 0\\
0 & \rho^{-1}
\end{array}\right]\left[\begin{array}{c}
q\\
p
\end{array}\right]\right)+\left[\begin{array}{c}
0\\
g(z)
\end{array}\right]u\,.\label{eq:sys_rep_SD}
\end{equation}
Consequently, by means of (\ref{eq:boundary_port_variables}) we are
able to determine the boundary-port variables
\[
\left[\begin{array}{c}
f_{\partial}\\
e_{\partial}
\end{array}\right]=\frac{1}{\sqrt{2}}\begin{tiny}\left[\begin{array}{c}
\rho^{-1}(L)p(L)-\rho^{-1}(0)p(0)\\
T(L)q(L)-T(0)q(0)\\
T(L)q(L)+T(0)q(0)\\
\rho^{-1}(L)p(L)+\rho^{-1}(0)p(0)
\end{array}\right]\end{tiny}\,,
\]
which further can be used to reformulate the boundary conditions (\ref{eq:BC_VS})
according to (\ref{eq:boundary_conditions_SD}) with
\begin{equation}
W_{\mathcal{B}}=\frac{1}{\sqrt{2}}\left[\begin{array}{cccc}
-1 & 0 & 0 & 1\\
0 & 1 & 1 & 0
\end{array}\right]\,.\label{eq:WB}
\end{equation}
 Thus, the formal change of the Hamiltonian $H=\frac{1}{2}\int_{0}^{L}(\frac{1}{\rho}p^{2}+Tq^{2})\mathrm{d}z$
follows to $\dot{H}=\int_{0}^{L}g(z)\frac{p}{\rho}u\mathrm{d}z$.

\subsubsection{Control Design}

Now, we focus on deriving a control law based on the method presented
in Subsection 4.2. Again, a dynamic controller with dimension one
is sufficient to reach our control objectives, and consequently, we
are interested in one Casimir function, i.e. we choose $\Gamma=1$.
Next, (\ref{eq:Casimir_Input_Obstacle_SD}) implies that $\Psi_{2}=0$
must be valid. Furthermore, if we set $B_{c}=1$, an evaluation of
the condition (\ref{eq:Casimir_cond_domain_SD}) yields the restriction
$\partial_{z}\Psi_{1}=g(z)$, where $\Psi_{1}$ has to be chosen such
that (\ref{eq:Casimir_Cond_Boundary_SD}) together with the boundary
conditions (\ref{eq:BC_VS}) is satisfied, i.e. $(\Psi_{1}\frac{p}{\rho})|_{0}^{L}=0$
must be valid. Thus, a relation between the plant and the controller
is given by
\begin{equation}
v_{c}=-\int_{0}^{L}\Psi_{1}q\mathrm{d}z\,,\label{eq:Casimir_VS_SD}
\end{equation}
and the dynamic of the controller is restricted to $\dot{v}_{c}=u_{c}$.
To achieve that the desired equilibrium (\ref{eq:equilibrium_w})
becomes a part of the minimum of $H_{cl}=H+\bar{H}_{c}$, we set
$\bar{H}_{c}=\frac{1}{2}d_{1}(v_{c}-v_{c}^{d}-\frac{u_{s}}{q_{c}})^{2}$,
with $d_{1}>0$ and (\ref{eq:Casimir_VS_SD}) with $q^{d}=\partial_{z}w^{d}$
instead of $q$ for $v_{c}^{d}$. If we use $u'=-d_{2}\bar{y}$ for
the damping injection, we obtain $\dot{H}_{cl}=-d_{2}\bar{y}^{2}$,
and the control law follows to
\begin{align}
u & =-d_{1}(-\int_{0}^{L}\Psi_{1}q\mathrm{d}z+\int_{0}^{L}\Psi_{1}q^{d}\mathrm{d}z)-d_{2}\bar{y}+u_{s}\,.\label{eq:control_law_VS_SD}
\end{align}

Next, it is of particular interest to compare the control laws (\ref{eq:control_law_VS_JB})
and (\ref{eq:control_law_VS_SD}). In fact, by substituting $q=\partial_{z}w$
and using integration by parts, we find that
\[
-\int_{0}^{L}\Psi_{1}\partial_{z}w\mathrm{d}z=-\left[\Psi_{1}w\right]_{0}^{L}+\int_{0}^{L}\partial_{z}(\Psi_{1})w\mathrm{d}z\,.
\]
Taking the boundary conditions and the restriction $\partial_{z}\Psi_{1}=g(z)$
into account, the condition (\ref{eq:Casimir_VS_SD}) can be rewritten
as $v_{c}=\int_{0}^{L}g\left(z\right)w\mathrm{d}z$. The same can
be done for $v_{c}^{d}=-\int_{0}^{L}\Psi_{1}q^{d}\mathrm{d}z$, and
consequently, we are able to give (\ref{eq:control_law_VS_SD}) as
\begin{equation}
u=-d_{1}(\int_{0}^{L}g\left(z\right)w\mathrm{d}z-\int_{0}^{L}g\left(z\right)w^{d}\mathrm{d}z)-d_{2}\bar{y}+u_{s}\,,\label{eq:control_law_SD}
\end{equation}
which is exactly the same control law as (\ref{eq:control_law_VS_JB})
if we set $c_{1}=d_{1}$ and $c_{2}=d_{2}$.

\subsubsection{Observer Design}

To be able to exploit the control scheme presented in the previous
subsection, in the following we design an observer according to the
strategy proposed in Subsection 5.2. Therefore, for the in-domain
actuated vibrating string, the observer can be introduced as
\[
\left[\begin{array}{c}
\dot{\hat{q}}\\
\dot{\hat{p}}
\end{array}\right]=\left[\begin{array}{cc}
0 & 1\\
1 & 0
\end{array}\right]\partial_{z}\left(\left[\begin{array}{cc}
T & 0\\
0 & \rho^{-1}
\end{array}\right]\left[\begin{array}{c}
\hat{q}\\
\hat{p}
\end{array}\right]\right)+\left[\begin{array}{c}
0\\
g
\end{array}\right]u+\left[\begin{array}{c}
l_{1}\\
l_{2}
\end{array}\right](\bar{y}-\hat{\bar{y}})\,,
\]
together with $W_{\mathcal{B}}\left[\begin{array}{c}
\hat{f}_{\partial}\\
\hat{e}_{\partial}
\end{array}\right]=\left[\begin{array}{c}
\rho(0)^{-1}\hat{p}(0)\\
T(L)\hat{q}(L)
\end{array}\right]=0$ by means of (\ref{eq:WB}). Thus, by using $\tilde{q}=q-\hat{q}$
and $\tilde{p}=p-\hat{p}$, we have
\[
\left[\begin{array}{c}
\dot{\tilde{q}}\\
\dot{\tilde{p}}
\end{array}\right]=\left[\begin{array}{cc}
0 & 1\\
1 & 0
\end{array}\right]\partial_{z}\left(\left[\begin{array}{cc}
T & 0\\
0 & \rho^{-1}
\end{array}\right]\left[\begin{array}{c}
\tilde{q}\\
\tilde{p}
\end{array}\right]\right)-\left[\begin{array}{c}
l_{1}\\
l_{2}
\end{array}\right](\bar{y}-\hat{\bar{y}})\,
\]
for the error dynamics and the collocated output reads as
\[
\tilde{y}=-\left[\begin{array}{cc}
l_{1} & l_{2}\end{array}\right]\left[\begin{array}{c}
T\tilde{q}\\
\rho^{-1}\tilde{p}
\end{array}\right]\,.
\]
Next, a careful investigation of the formal change of $\tilde{H}=\frac{1}{2}\int_{0}^{L}(\frac{1}{\rho}\tilde{p}^{2}+T\tilde{q}^{2})\mathrm{d}z$,
which can be deduced to
\begin{equation}
\dot{\tilde{H}}=-\int_{0}^{L}(l_{1}(\bar{y}-\hat{\bar{y}})T\tilde{q}+l_{2}(\bar{y}-\hat{\bar{y}})\frac{1}{\rho}\tilde{p})\mathrm{d}z\label{eq:H_p_error_VS_SD}
\end{equation}
by means of integration by parts and taking the boundary conditions
$W_{\mathcal{B}}\left[\begin{array}{c}
\tilde{f}_{\partial}\\
\tilde{e}_{\partial}
\end{array}\right]=\left[\begin{array}{c}
\rho(0)^{-1}\tilde{p}(0)\\
T(L)\tilde{q}(L)
\end{array}\right]=0$ into account, allows to determine the observer gains $l_{1}$ and
$l_{2}$ properly. In fact, the choice $l_{1}=0$ and $l_{2}=kg\left(z\right)$
-- i.e. we have exactly the same components for the observer gain
as in Subsection 6.1.2 -- renders (\ref{eq:H_p_error_VS_SD}) to
$\dot{\tilde{H}}=-k(\bar{y}-\hat{\bar{y}})^{2}\leq0$.

Let us finally mention again that the main difference of the proposed
approaches is the choice of the variables, cf. $x=\left[w,p\right]^{T}$
for the jet-bundle approach and $\chi=\left[q,p\right]^{T}$ for the
Stokes-Dirac approach, which implies that the Hamiltonian function
is different within the considered frameworks, although the energy
of course is the same. The choice of the variables also affect the
ansatz of the Casimir functions which might have a further impact
on the controller states; however, for the system under consideration
we have shown that we obtain the same control law, cf. (\ref{eq:control_law_VS_JB})
and (\ref{eq:control_law_SD}). Furthermore, although we considered
two completely different observer systems -- i.e. we have different
observer states -- we obtain the same dissipation rate for the observer
errors.

\section{Conclusion and Outlook}

In this paper, we have considered the controller and observer design
for infinite-dimensional pH-systems with in-domain actuation based
on a jet-bundle as well as on a Stokes-Dirac approach. In particular,
we have shown that -- although the underlying system descriptions
and therefore the controller and observer design are quite different
-- the proposed approaches yield the same results. As we only sketched
the proof of stability for the observer error, stability investigations
for the controller as well as for the combination with the observer
remain to be done.

%% There are a number of predefined theorem-like environments in
%% ifacconf.cls:
%%
%% \begin{thm} ... \end{thm}            % Theorem
%% \begin{lem} ... \end{lem}            % Lemma
%% \begin{claim} ... \end{claim}        % Claim
%% \begin{conj} ... \end{conj}          % Conjecture
%% \begin{cor} ... \end{cor}            % Corollary
%% \begin{fact} ... \end{fact}          % Fact
%% \begin{hypo} ... \end{hypo}          % Hypothesis
%% \begin{prop} ... \end{prop}          % Proposition
%% \begin{crit} ... \end{crit}          % Criterion

\bibliography{my_bib}             % bib file to produce the bibliography
                                                     % with bibtex (preferred)
                                                   
%\begin{thebibliography}{xx}  % you can also add the bibliography by hand

%\bibitem[Able(1956)]{Abl:56}
%B.C. Able.
%\newblock Nucleic acid content of microscope.
%\newblock \emph{Nature}, 135:\penalty0 7--9, 1956.

%\bibitem[Able et~al.(1954)Able, Tagg, and Rush]{AbTaRu:54}
%B.C. Able, R.A. Tagg, and M.~Rush.
%\newblock Enzyme-catalyzed cellular transanimations.
%\newblock In A.F. Round, editor, \emph{Advances in Enzymology}, volume~2, pages
%  125--247. Academic Press, New York, 3rd edition, 1954.

%\bibitem[Keohane(1958)]{Keo:58}
%R.~Keohane.
%\newblock \emph{Power and Interdependence: World Politics in Transitions}.
%\newblock Little, Brown \& Co., Boston, 1958.

%\bibitem[Powers(1985)]{Pow:85}
%T.~Powers.
%\newblock Is there a way out?
%\newblock \emph{Harpers}, pages 35--47, June 1985.

%\bibitem[Soukhanov(1992)]{Heritage:92}
%A.~H. Soukhanov, editor.
%\newblock \emph{{The American Heritage. Dictionary of the American Language}}.
%\newblock Houghton Mifflin Company, 1992.

%\end{thebibliography}

\end{document}